\newtheorem{algorithm}{Algorithm}
\begin{document}
	
	\title{Self-adaptive algorithms for quasiconvex programming and applications to machine learning
	}
	
	%\titlerunning{titlerunning}        % if too long for running head
	
	\author{Tran Ngoc Thang \and Trinh Ngoc Hai
	}
	
	%\authorrunning{Short form of author list} % if too long for running head
	
	\institute{
			Tran Ngoc Thang \at
		School of Applied Mathematics and Informatics, Hanoi University of Science and Technology,\\
		1st Dai Co Viet, Hai Ba Trung, Hanoi, Viet Nam \\
		\email{thang.tranngoc@hust.edu.vn}
		\and
		Trinh Ngoc Hai (\;\Letter\;)  \at 
	    School of Applied Mathematics and Informatics,
	    Hanoi University of Science and Technology,
	    \\1st Dai Co Viet, Hai Ba Trung, Hanoi, Viet Nam\\
		\email{hai.trinhngoc@hust.edu.vn}
	}
	
	\date{\today}
	% The correct dates will be entered by the editor

	\maketitle
	
	\begin{abstract}
For solving a broad class of nonconvex programming problems on an unbounded constraint set, we provide a self-adaptive step-size strategy that does not include line-search techniques and establishes the convergence of a generic approach under mild assumptions. Specifically, the objective function may not satisfy the convexity condition. Unlike descent line-search algorithms, it does not need a known Lipschitz constant to figure out how big the first step should be. The crucial feature of this process is the steady reduction of the step size until a certain condition is fulfilled.
In particular, it can provide a new gradient projection approach to optimization problems with an unbounded constrained set. The correctness of the proposed method is verified by preliminary results from some computational examples. To demonstrate the effectiveness of the proposed technique for large-scale problems, we apply it to some experiments on machine learning, such as supervised feature selection, multi-variable logistic regressions and neural networks for classification. 

		\keywords{nonconvex programming \and gradient descent algorithms \and quasiconvex functions
		\and pseudoconvex functions
		\and self-adaptive step-sizes	}
		
		% \PACS{PACS code1 \and PACS code2 \and more}
		\subclass{90C25
		\and 90C26  \and 68Q32 \and 93E35}
	\end{abstract}
	\section{Introduction}\label{s1}
	
Gradient descent methods are a common tool for a wide range of programming problems, from convex to nonconvex, and have numerous practical applications (see \cite{Boyd}, \cite{Cevher}, \cite{Lan} and references therein). At each iteration, gradient descent algorithms provide an iterative series of solutions based on gradient directions and step sizes.
For a long time, researchers have focused on finding the direction to improve the convergence rate of techniques, while the step-size was determined using one of the few well-known approaches (see \cite{Boyd}, \cite{Nesterov}). 

Recently, new major areas of machine learning applications with high dimensionality and nonconvex objective functions have required the development of novel step-size choosing procedures to reduce the method's overall computing cost (see \cite{Cevher}, \cite{Lan}). The exact or approximate one-dimensional minimization line-search incurs significant computational costs per iteration, particularly when calculating the function value is nearly identical to calculating its derivative and requires the solution of complex auxiliary problems (see \cite{Boyd}). To avoid the line-search, the step-size value may be calculated using prior information such as Lipschitz constants for the gradient. However, this requires using just part of their inexact estimations, which leads to slowdown convergence. This is also true for the well-known divergent series rule (see \cite{Kiwiel}, \cite{Nesterov}).

Although Kiwiel developed the gradient approach for quasiconvex programming in 2001 \cite{Kiwiel}, it results in slow convergence due to the use of diminishing step-size. Following that, there are some improvements to the gradient method, such as the work of Yu et al. (\cite{Yu}, 2019), Hu et al. (\cite{Hu}, 2020), which use a constant step-size but the objective function must fulfil the Hölder condition. The other method is the neurodynamic approach, which uses recurrent neural network models to solve pseudoconvex programming problems with unbounded constraint sets (see \cite{Bian} (Bian et al, 2018), \cite{Liu} (Liu et al., 2022)). Its step-size selection is fixed and not adaptive.

The adaptive step-size procedure was proposed in \cite{Konnov} (Konnov, 2018) and \cite{Ferreira} (Ferreira et. al., 2020) . The method given in \cite{Konnov}, whereby a step-size algorithm for the conditional gradient method is proposed, is effective for solving pseudoconvex programming problems with the boundedness of the feasible set. It has been expanded in \cite{Ferreira} to the particular case of an unbounded feasible set, which cannot be applied to the unconstrained case. In this research, we propose a novel and line-search-free adaptive step-size algorithm for a broad class of programming problems where the objective function is nonconvex and smooth,  the constraint set is unbounded, closed and convex. A crucial component of this procedure is gradually decreasing the step size until a predetermined condition is fulfilled. Although the Lipschitz continuity of the gradient of the objective function is required for convergence, the approach does not make use of predetermined constants. The proposed change has been shown to be effective in preliminary computational tests. We perform various machine learning experiments, including multi-variable logistic regressions and neural networks for classification, to show that the proposed method performs well on large-scale tasks.

The rest of this paper is structured as follows: Section 2 provides preliminary information and details the problem formulation. Section 3 summarizes the primary results, including the proposed algorithms. 
Section 4 depicts numerical experiments and analyzes their computational outcomes. Section 5 presents applications to certain machine learning problems. The final section draws some conclusions.
	\section{Preliminaries}\label{s2}
In the whole paper, we assume that  $C$ is a nonempty, closed and convex set in $ \mathbb{R}^m $, $f:\mathbb{R}^m \to \mathbb R$ is a differentiable function on an open set containing $ C $, the mapping $\nabla f$ is Lipschitz continuous.
	We consider the optimization problem:
	\begin{equation}\label{OP}
		\min\limits_{x\in C} f(x)  \tag{OP($ f,C $)}.
	\end{equation}
 Assume that the solution set of \eqref{OP} is not empty.
 First, we  recall some definitions and basic results that will be used in the next section of the article. The
interested reader is referred to \cite{BauschkeCombettes,Rockafellar} for bibliographical references.  

For $x\in \mathbb{R}^m$, denote by $P_C (x)$ the projection of $x$ onto $C$, i.e., 
$$ 
P_C(x):=\text{argmin}\{\|z-x\|:z\in C\}.
 $$
\begin{proposition}\label{prop_Pc}\textup{\cite{BauschkeCombettes}} It holds that
\begin{itemize}
\item[\textup{(i)}]  $ \| P_C(x)-P_C(y) \| \leq \|x-y\|$ for all $x,y\in \mathbb{R}^m$,
\item[\textup{(ii)}] $\left\langle  y-P_C(x),x-P_C(x)  \right\rangle \leq 0$ for all $x\in \mathbb{R}^m$, $y\in C$.
\end{itemize}
\end{proposition}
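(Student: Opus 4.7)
The plan is to prove (ii) first from the first-order optimality condition of the defining minimization problem, and then deduce (i) from (ii) by a symmetry/addition trick combined with Cauchy–Schwarz. Both are textbook arguments for projections onto closed convex sets, so I expect the proof to be short and routine; the only thing to be careful about is the convexity argument used to obtain the variational inequality.

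For part (ii), I would fix $x\in\mathbb{R}^m$ and $y\in C$ and exploit the fact that $P_C(x)$ minimizes $z\mapsto \tfrac12\|z-x\|^2$ over the convex set $C$. Convexity of $C$ gives $(1-t)P_C(x)+ty\in C$ for all $t\in[0,1]$, so the function
\[
\varphi(t)=\tfrac12\bigl\|(1-t)P_C(x)+ty-x\bigr\|^2
\]
attains its minimum on $[0,1]$ at $t=0$. Differentiating and evaluating at $t=0$ yields $\varphi'(0)=\langle y-P_C(x),\,P_C(x)-x\rangle\ge 0$, which is exactly the inequality $\langle y-P_C(x),\,x-P_C(x)\rangle\le 0$. (Alternatively, one can expand $\|(1-t)P_C(x)+ty-x\|^2-\|P_C(x)-x\|^2\ge 0$, divide by $t>0$, and let $t\downarrow 0$.) The existence and uniqueness of $P_C(x)$ that underlies this argument follow from $C$ being nonempty, closed and convex, and from strict convexity and coercivity of $\|\cdot-x\|^2$.

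For part (i), I would apply (ii) twice. Taking $y:=P_C(y)\in C$ in the variational inequality at the point $x$ gives
\[
\langle P_C(y)-P_C(x),\,x-P_C(x)\rangle\le 0,
\]
and by symmetry, swapping the roles of $x$ and $y$,
\[
\langle P_C(x)-P_C(y),\,y-P_C(y)\rangle\le 0.
\]
Adding these two inequalities and rearranging produces
\[
\|P_C(x)-P_C(y)\|^2\le \langle P_C(x)-P_C(y),\,x-y\rangle,
\]
i.e.\ the firm nonexpansiveness of $P_C$. The Cauchy–Schwarz inequality applied to the right-hand side then gives $\|P_C(x)-P_C(y)\|^2\le \|P_C(x)-P_C(y)\|\,\|x-y\|$, from which (i) follows (handling the trivial case $P_C(x)=P_C(y)$ separately).

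I do not anticipate a genuine obstacle here: the only subtlety is ensuring that the one-sided derivative argument in (ii) is justified, which is automatic because $\varphi$ is a polynomial in $t$. Everything else is algebraic manipulation.
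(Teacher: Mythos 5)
Your proof is correct: the variational-inequality characterization in (ii) via the one-sided derivative of $\varphi(t)=\tfrac12\|(1-t)P_C(x)+ty-x\|^2$ at $t=0$, and the deduction of (i) from (ii) through firm nonexpansiveness and Cauchy--Schwarz, are exactly the standard arguments. The paper itself gives no proof --- it states the proposition with a citation to Bauschke and Combettes --- and your argument is the same textbook proof found there, so there is nothing further to reconcile.
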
 
	\begin{definition}\label{def1}\cite{Mangasarian}
		The function $f:\mathbb{R}^m \to \mathbb R$  is said to be 
		\begin{itemize}
			\item[$\bullet$] 		 convex on $ C $ if for all $ x,y\in C $, $ \lambda \in [0,1] $, it holds that
			\begin{equation*}
				f(\lambda x +(1-\lambda)y)\leq \lambda f(x)+(1-\lambda)f(y).
			\end{equation*}
			\item[$\bullet$] 		pseudoconvex on $ C $ if for all $ x,y\in C $, it holds that
			\begin{equation*}
				\langle \nabla f(x), y-x \rangle \geq 0 \Rightarrow f(y)\geq f(x).
			\end{equation*}
			\item[$\bullet$]  quasiconvex on $ C $ if for all $ x,y\in C $, $ \lambda \in [0;1] $, it holds that
			\begin{equation*}
				f(\lambda x +(1-\lambda)y)\leq \max \left\{f(x);f(y) \right\}.
			\end{equation*}
		\end{itemize}
	\end{definition}
	\begin{proposition}\label{p1}\cite{Dennis}
		The differentiable function $ f $ is quasiconvex on $ C $ if and only if 
		\[ 
		f(y)\leq f(x) \Rightarrow 		\langle \nabla f(x), y-x \rangle \leq 0.
		\] 
	\end{proposition}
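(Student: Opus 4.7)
The plan is to prove both directions separately, with the reverse direction being the substantive part. Throughout, I will work with the one-variable restriction $\varphi(t) := f((1-t)x + ty)$, which is differentiable on $[0,1]$ with $\varphi'(t) = \langle \nabla f((1-t)x + ty), y - x \rangle$.

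For the forward implication, I assume $f$ is quasiconvex on $C$ and take any $x,y \in C$ with $f(y) \leq f(x)$. Applying the definition of quasiconvexity with $\lambda \in (0,1]$ yields $f(x + \lambda(y-x)) \leq \max\{f(x),f(y)\} = f(x)$, so
\[
\frac{f(x + \lambda(y-x)) - f(x)}{\lambda} \leq 0.
\]
Letting $\lambda \to 0^+$ and using differentiability gives $\langle \nabla f(x), y - x \rangle \leq 0$, which is all that is required.

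For the reverse implication, I will argue by contradiction. Suppose the gradient condition holds but $f$ is not quasiconvex on $C$. Then there exist $x,y \in C$, which I may assume (by swapping if necessary) satisfy $f(y) \leq f(x)$, and some $\bar{t} \in (0,1)$ with $\varphi(\bar{t}) > f(x)$. By continuity, $\varphi$ attains its maximum on $[0,1]$ at some $t_1 \in (0,1)$ with $\varphi(t_1) > \varphi(0) \geq \varphi(1)$. Define
\[
s^* := \inf\{s \in [t_1,1] : \varphi(s) \leq f(x)\},
\]
which is well defined because $\varphi(1) = f(y) \leq f(x)$. Then $s^* > t_1$, $\varphi(s^*) = f(x)$ by continuity, and $\varphi(s) > f(x) \geq f(y)$ for every $s \in [t_1, s^*)$.

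The crux is to show that $\varphi$ is constant on $[t_1, s^*]$, which will contradict $\varphi(t_1) > f(x) = \varphi(s^*)$. For any $s \in (t_1, s^*)$, set $z_s := (1-s)x + sy$. Since $f(x) \leq f(z_s)$, the gradient hypothesis applied with the roles $x \mapsto z_s$, $y \mapsto x$ yields $\langle \nabla f(z_s), x - z_s \rangle \leq 0$, hence $-s\,\varphi'(s) \leq 0$ and so $\varphi'(s) \geq 0$. Likewise $f(y) \leq f(z_s)$ gives $\langle \nabla f(z_s), y - z_s \rangle \leq 0$, i.e.\ $(1-s)\varphi'(s) \leq 0$, so $\varphi'(s) \leq 0$. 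Combining these forces $\varphi'(s) = 0$ throughout $(t_1, s^*)$, whence $\varphi$ is constant on $[t_1,s^*]$ and the contradiction follows. The main obstacle I anticipate is presenting this step cleanly, since a naive attempt to invoke the hypothesis only at a single maximizer of $\varphi$ yields $\varphi'(t_1) = 0$ but no contradiction; the key is to propagate the equality $\varphi' \equiv 0$ over an entire sub-interval where $\varphi$ stays strictly above $f(x)$.
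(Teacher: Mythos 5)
Your proof is correct. Note, however, that the paper does not prove this proposition at all: it is quoted as a known result with a citation to Dennis--Schnabel, so there is no "paper proof" to compare against; what you have written is essentially the standard Arrow--Enthoven-type argument for the first-order characterization of differentiable quasiconvexity. Both directions check out: the forward direction is the usual difference-quotient argument along the segment, and in the reverse direction your device of locating a sub-interval $[t_1,s^*]$ on which $\varphi$ stays strictly above $f(x)=\max\{f(x),f(y)\}$, and then applying the gradient hypothesis at each interior point $z_s$ twice (once against $x$, once against $y$) to force $\varphi'\equiv 0$ there, is exactly the right way to avoid the trap you mention of only getting $\varphi'(t_1)=0$ at a single maximizer. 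Two cosmetic remarks: the maximality of $t_1$ is never used, any $t_1\in(0,1)$ with $\varphi(t_1)>f(x)$ would do; and you should state explicitly that $z_s\in C$ (by convexity of $C$), since the hypothesis is only assumed for pairs of points in $C$ --- you use this implicitly and it is harmless, but it is the one place where the convexity of $C$ enters.
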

	It is worth mentioning that "$f$ is convex" $\Rightarrow$ "$f$ is  pseudoconvex" $\Rightarrow$ "$f$ is  quasiconvex"\cite{Mangasarian}.
	\begin{proposition}\label{p1}\cite{Dennis}
		Suppose that $ \nabla f $ is   $L$-Lipschitz continuous on $ C $. For all $ x,y\in C $, it holds that 
		\[ 
		\left|f(y)-f(x) - \left<\nabla f(x),y-x    \right>    \right| \leq \dfrac L 2 \| y-x \|^2.
		\] 
	\end{proposition}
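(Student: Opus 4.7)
The plan is to use the standard one-dimensional reduction via the fundamental theorem of calculus, exploiting the convexity of $C$ to keep the whole segment inside the set where $\nabla f$ is $L$-Lipschitz.

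First I would fix $x,y\in C$ and introduce the auxiliary function $\varphi:[0,1]\to\mathbb{R}$ defined by $\varphi(t)=f\bigl(x+t(y-x)\bigr)$. Because $C$ is convex, the whole segment $\{x+t(y-x):t\in[0,1]\}$ lies in $C$, and since $f$ is differentiable on an open set containing $C$, the chain rule gives $\varphi'(t)=\langle \nabla f(x+t(y-x)),\,y-x\rangle$ for every $t\in[0,1]$. Then the fundamental theorem of calculus yields
\[
f(y)-f(x)=\varphi(1)-\varphi(0)=\int_0^1\langle \nabla f(x+t(y-x)),\,y-x\rangle\,dt.
\]

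Next I would subtract the constant term $\langle\nabla f(x),y-x\rangle=\int_0^1\langle\nabla f(x),y-x\rangle\,dt$ from both sides to obtain
\[
f(y)-f(x)-\langle\nabla f(x),y-x\rangle=\int_0^1\bigl\langle \nabla f(x+t(y-x))-\nabla f(x),\,y-x\bigr\rangle\,dt.
\]
Taking absolute values, moving them inside the integral, and applying the Cauchy--Schwarz inequality followed by the $L$-Lipschitz continuity of $\nabla f$ on $C$ gives the pointwise bound $\|\nabla f(x+t(y-x))-\nabla f(x)\|\le L\,t\,\|y-x\|$. Integrating this against $\|y-x\|$ over $[0,1]$ produces $\int_0^1 Lt\,\|y-x\|^2\,dt=\tfrac{L}{2}\|y-x\|^2$, which is exactly the desired estimate.

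There is no real obstacle; the only points requiring a touch of care are (i) justifying that the segment stays in $C$ (which is where convexity of $C$ enters) so that the Lipschitz hypothesis on $C$ applies along the path, and (ii) writing the constant term as an integral so that a single application of Cauchy--Schwarz plus Lipschitz continuity handles the whole bound uniformly in $t$. Everything else is a routine estimate.
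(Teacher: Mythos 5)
Your argument is correct and is precisely the standard descent-lemma proof (one-dimensional reduction along the segment, fundamental theorem of calculus, Cauchy--Schwarz plus Lipschitz continuity, then integration of $Lt$ to get the factor $\tfrac{L}{2}$), which is exactly the argument in the textbook source the paper cites for this proposition rather than proving it in-text. Your two points of care --- convexity of $C$ keeping the segment inside the set where $\nabla f$ is $L$-Lipschitz, and writing the linear term as an integral --- are the right ones, and nothing is missing.
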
	
	\begin{lemma}\label{l1}\cite{Xu}
		Let $ \{a_k \} $; $ \{b_k\} \subset (0;\infty)$ be sequences such that 
		\[ 
		a_{k+1} \leq a_k +b_k\ \forall k\geq 0;\quad \sum_{k=0}^\infty b_k <\infty.
		\]
		Then, there exists the limit $ \lim_{k\to \infty}  a_k=c \in \mathbb R.$
	\end{lemma}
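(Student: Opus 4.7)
The plan is to turn the almost-monotone inequality $a_{k+1}\le a_k+b_k$ into a genuinely monotone one by subtracting off the tail of the series $\sum b_j$. Concretely, I would introduce the auxiliary sequence
\[
\tilde a_k \;:=\; a_k + \sum_{j=k}^{\infty} b_j,
\]
which is well defined and finite for every $k$ because $\sum_{j=0}^\infty b_j<\infty$.

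First I would verify that $\{\tilde a_k\}$ is non-increasing. Using the hypothesis and splitting off the $j=k$ term,
\[
\tilde a_{k+1} \;=\; a_{k+1} + \sum_{j=k+1}^{\infty} b_j \;\le\; a_k + b_k + \sum_{j=k+1}^{\infty} b_j \;=\; a_k + \sum_{j=k}^{\infty} b_j \;=\; \tilde a_k.
\]
Next, since $a_k>0$ and $b_j>0$, clearly $\tilde a_k > 0$ for every $k$, so the sequence is bounded below. The monotone convergence theorem then yields a finite limit $\tilde a_k \to \tilde c \ge 0$.

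To finish, I would use that the tail $\sum_{j=k}^{\infty} b_j \to 0$ as $k\to\infty$ (a standard consequence of the convergence of the series). Writing
\[
a_k \;=\; \tilde a_k - \sum_{j=k}^{\infty} b_j,
\]
both terms on the right-hand side converge, so $a_k \to \tilde c =: c \in \mathbb{R}$, which is the conclusion. There is no real obstacle here; the only step that requires a bit of thought is guessing the correct auxiliary sequence $\tilde a_k$, after which everything reduces to the monotone convergence theorem and the tail behaviour of a convergent series.
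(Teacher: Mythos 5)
Your proof is correct: the tail-subtraction sequence $\tilde a_k = a_k + \sum_{j=k}^{\infty} b_j$ is non-increasing and bounded below, hence convergent, and since the tail of a convergent series vanishes, $a_k$ converges to the same finite limit. The paper itself states this lemma without proof, citing Xu, and your argument is precisely the standard one used for this classical result, so there is nothing to add or correct.
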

	\section{Main results}\label{s3}
	\begin{algorithm}\label{algGDA}(Algorithm GDA)\\
		\medskip
		\textbf{Step 0.} Choose $x^0 \in C$,  $\lambda_0 \subset (0,+\infty)$, $ \sigma, \kappa \in (0,1) $. Set $k=0$.\\
		\textbf{Step 1.} Given $x^k$ and $ \lambda_k $, compute $x^{k+1}$ and $ \lambda_{k+1} $ as
		\begin{align*}
			& x^{k+1}=P_C (x^k-\lambda_k \nabla f (x^k)) ,\\
			& \textbf{If }  f(x^{k+1}) \leq f(x^k) -\sigma \left< \nabla f (x^k), x^k-x^{k+1}   \right>\textbf{ then set } \lambda_{k+1}:=\lambda_k \textbf{ else set } \lambda_{k+1}:=\kappa \lambda_k.
		\end{align*}
		\textbf{Step 2.}  Update $k :=k+1$. \textbf{If} $ x^{k+1}=x^{k}$ \textbf{ then } STOP \textbf{ else } go to \textbf{Step  1}.
	\end{algorithm}
	\begin{remark}\label{rmk1}
		If Algorithm \ref{algGDA} stops at step $ k $, then $ x^k $ is a stationary point of the problem \ref{OP}.
		Indeed, since $ x^{k+1}=P_C\left( x^{k}-\lambda_k \nabla f(x^k)  \right)$, applying Proposition \ref{prop_Pc}-(ii), we have
		\begin{equation}\label{e2}
			\left < z-x^{k+1},x^k-\lambda_k \nabla f (x^k) -x^{k+1}  \right> \leq 0\ \forall z\in C.
		\end{equation}
		If $ x^{k+1}=x^k $, we get
		\begin{equation}\label{e21}
			\left\langle \nabla f (x^k), z-x^k \right\rangle \geq 0\quad \forall z \in C,
		\end{equation}
		which means $ x^k $ is a stationary point of the problem. If, in addition, $ f $ is pseudoconvex, from \eqref{e21}, it implies that $ f(z)\geq f(x^k) $ for all $ z\in C $, or $ x^k $ is a solution of  \ref{OP}. 
	\end{remark}
	Now, suppose that the algorithms generates an infinite sequence. We will prove that this sequence converges to a solution of the problem \ref{OP}.
	\begin{theorem}\label{theorem1}
		Assume that the sequence $ \left\{ x^k\right\} $ is generated by Algorithm \ref{algGDA}. Then,
		the sequence $ \left\{f(x^k) \right\} $ is convergent and each limit point (if any) of the sequence $ \left\{ x^k\right\} $ is a stationary point of the problem. Moreover,
		\begin{itemize}
			\item[$ \bullet $] if $ f $ is quasiconvex on $ C $, then the sequence $ \left\{x^k \right\} $ converges to a stationary point of the problem.
			\item[$ \bullet $]  if $ f $ is pseudoconvex on $ C $, then the sequence $ \left\{x^k \right\} $ converges to a solution of the problem.
		\end{itemize}
	\end{theorem}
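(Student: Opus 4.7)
The plan is to first show that the step-size $\lambda_k$ stabilizes at a strictly positive value after finitely many iterations, then deduce the convergence of $\{f(x^k)\}$ and the summability of $\|x^{k+1}-x^k\|^2$ from the sufficient-decrease test, next identify limit points as stationary by passing to the limit in the projection recursion, and finally use a Fej\'er-monotonicity argument toward a solution for the quasi-/pseudoconvex statements.

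For the step-size, applying Proposition \ref{prop_Pc}(ii) with $y=x^k$ to $x^{k+1}=P_C(x^k-\lambda_k\nabla f(x^k))$ gives
\[
\|x^{k+1}-x^k\|^2 \leq \lambda_k \langle \nabla f(x^k), x^k-x^{k+1}\rangle.
\]
Combining this with the $L$-Lipschitz descent inequality from Proposition \ref{p1} shows that the sufficient-decrease test is automatic whenever $\lambda_k \leq 2(1-\sigma)/L$. Since $\lambda_k$ is only reduced (by $\kappa$) when the test fails, it is non-increasing, stops decreasing once it crosses this threshold, and therefore stabilizes at some $\bar\lambda \geq \min\{\lambda_0, 2\kappa(1-\sigma)/L\}>0$ after a finite index $K$. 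From $K$ on, the test combined with the projection inequality above yields $f(x^k)-f(x^{k+1}) \geq (\sigma/\bar\lambda)\|x^{k+1}-x^k\|^2 \geq 0$; tail-monotonicity and the lower bound $\min_C f$ then give convergence of $\{f(x^k)\}$ to some $c$, and telescoping produces $\sum\|x^{k+1}-x^k\|^2<\infty$, so in particular $\|x^{k+1}-x^k\|\to 0$.

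If $x^{k_j}\to \bar x$, then $x^{k_j+1}\to \bar x$ and $\lambda_{k_j}\to\bar\lambda$, so passing to the limit in the recursion via the nonexpansiveness of $P_C$ (Proposition \ref{prop_Pc}(i)) and continuity of $\nabla f$ gives $\bar x=P_C(\bar x-\bar\lambda\nabla f(\bar x))$, whence Proposition \ref{prop_Pc}(ii) delivers $\langle \nabla f(\bar x),z-\bar x\rangle\geq 0$ for all $z\in C$, i.e.\ $\bar x$ is stationary.

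For the quasiconvex case, pick any solution $x^*$, so $f(x^*)\leq f(x^k)$ for every $k$, and the quasiconvex gradient characterization (Proposition \ref{p1}) gives $\langle \nabla f(x^k),x^*-x^k\rangle \leq 0$. Expanding $\|x^{k+1}-x^*\|^2-\|x^k-x^*\|^2$ and using Proposition \ref{prop_Pc}(ii) with $y=x^*$, together with the split $\langle x^*-x^{k+1},\nabla f(x^k)\rangle = \langle x^*-x^k,\nabla f(x^k)\rangle + \langle x^k-x^{k+1},\nabla f(x^k)\rangle$, I bound the first summand by $0$ (quasiconvexity) and the second, using the sufficient-decrease test, by $(1/\sigma)(f(x^k)-f(x^{k+1}))$. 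This yields an inequality of the form $\|x^{k+1}-x^*\|^2 \leq \|x^k-x^*\|^2+b_k$ with $\sum b_k<\infty$, so Lemma \ref{l1} makes $\|x^k-x^*\|$ convergent; in particular $\{x^k\}$ is bounded and admits a limit point $\tilde x$, which is stationary by the previous paragraph. Continuity of $f$ and $f(x^k)\downarrow c$ imply $f(\tilde x)=c\leq f(x^k)$, so the same Fej\'er argument applies with $\tilde x$ in place of $x^*$ and forces $\|x^k-\tilde x\|\to 0$. The pseudoconvex case is then immediate: by Definition \ref{def1}, stationarity of $\tilde x$ yields $f(z)\geq f(\tilde x)$ for every $z\in C$, so $\tilde x$ solves \eqref{OP}. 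The main obstacle I anticipate is precisely this Fej\'er estimate: the cross-term $\lambda_k\langle x^k-x^{k+1},\nabla f(x^k)\rangle$ is not obviously summable, and one must exploit the sufficient-decrease inequality itself to convert it into the telescoping increment $f(x^k)-f(x^{k+1})$, while simultaneously verifying that quasiconvexity is applicable at every $k\geq K$ by means of the tail-monotonicity of $\{f(x^k)\}$.
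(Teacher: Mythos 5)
Your proof is correct and follows essentially the same route as the paper: stabilization of $\lambda_k$ via the descent lemma and the projection inequality, tail sufficient decrease giving convergence of $\{f(x^k)\}$ and $\sum\|x^{k+1}-x^k\|^2<\infty$, stationarity of limit points by passing to the limit in the projection step, and a quasi-Fej\'er argument with Lemma \ref{l1} upgraded from a comparison point to the limit point itself. The only differences are cosmetic: you bound $\lambda_k$ below directly (with an explicit constant) where the paper argues by contradiction, and you work with a solution $x^*$ and then $\tilde x$ where the paper uses the set $U=\{x\in C: f(x)\le f(x^k)\ \forall k\}$, with your cross-term bound $(1/\sigma)(f(x^k)-f(x^{k+1}))$ playing the role of the paper's summable term $2\lambda_k\langle\nabla f(x^k),x^k-x^{k+1}\rangle$.
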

	
	\begin{proof}
		Applying Proposition \ref{p1}, we get
		\begin{equation}\label{e1}
			f(x^{k+1}) \leq f(x^k) + \left< \nabla f(x^k), x^{k+1}-x^k  \right> +\frac L 2 \| x^{k+1}-x^k \| ^2.
		\end{equation}
		In \eqref{e2}, taking $ z=x^k \in C$, we arrive at
		\begin{equation}\label{e3}
			\left< \nabla f(x^k),x^{k+1}-x^k\right> \leq -\dfrac{1}{\lambda_k}\| x^{k+1}-x^k\|^2.
		\end{equation}
		Combining \eqref{e1} and \eqref{e3}, we obtain
		\begin{equation}\label{e4}
			f(x^{k+1}) \leq f(x^k) -\sigma \left< \nabla f(x^k), x^{k}-x^{k+1}  \right> -\left(\frac {1-\sigma} {\lambda_k}- \frac L 2\right) \| x^{k+1}-x^k \| ^2.	
		\end{equation}
		We now claim that $ \{\lambda_k\} $ is bounded away from zero, or in other words, the step size changes finite times. Indeed, suppose, by contrary, that $ \lambda_k \to 0. $ From \eqref{e4}, there exists $ k_0 \in \mathbb N $ satisfying
		\[ 
		f(x^{k+1}) \leq f(x^k) -\sigma \left< \nabla f(x^k), x^{k}-x^{k+1}  \right>\ \forall k\geq k_0.
		\]
		According to the construction of $\lambda_k$, the last inequality implies that $ \lambda_k =\lambda_{k_0} $ for all $ k\geq k_0 $. This is a contradiction. 
		And so, there exists $ k_1 \in \mathbb N $ such that for all $ k\geq k_1 $, we have $ \lambda_k=\lambda_{k_1}$ and
		\begin{equation}\label{e5}
			f(x^{k+1}) \leq f(x^k) -\sigma \left< \nabla f(x^k), x^{k}-x^{k+1}  \right>.
		\end{equation} 
		Noting that $  \left< \nabla f(x^k), x^{k}-x^{k+1}  \right> \geq 0 $, we infer that  the sequence $ \{ f(x^k)\} $ is convergent and 
		\begin{equation}\label{e6}
			\sum_{k=0}^\infty \left< \nabla f(x^k), x^{k}-x^{k+1}  \right> <\infty;\quad 	 \sum_{k=0}^\infty       \|  x^{k+1}-x^k \|^2 <\infty.  
		\end{equation}
		From \eqref{e2}, for all $ z\in C $, we have
		\begin{align}\label{e7}
			\| x^{k+1}-z \|^2 &= 	\| x^{k}-z \|^2  -	\| x^{k+1}-x^k \|^2  +2 \left \langle x^{k+1}-x^k,x^{k+1}-z \right \rangle\notag\\
			&\leq 	\| x^{k}-z \|^2  -	\| x^{k+1}-x^k \|^2  +2 \lambda_k    \left \langle \nabla f(x^k), z-x^{k+1}   \right \rangle.
		\end{align}	
		Let $ \bar x $ be a limit point of $ \left\{ x^k\right\} $. There exists a subsequence $ \left\{x^{k_i} \right\}\subset \left\{x^k \right\} $	 such that $\lim_{i\to\infty} x^{k_i} =\bar x.$ In \eqref{e7}, let $ k=k_i $ and take the limit as $ i\to \infty$. Noting that $ \| x^k-x^{k+1} \|\to 0 $, $ \nabla f $ is continuous, we get
		\[ 
		\left \langle \nabla f(\bar x), z-\bar x   \right \rangle \geq 0\quad \forall z\in C,
		\]
		which means $ \bar x $ is a stationary point of the problem. Now, suppose that $ f $ is quasiconvex on $ C $. Denote 
		\[ 
		U:=\left\{ x \in C: f(x)\leq f(x^k)\quad \forall k\geq0 \right\}.
		\]
		Note that $ U $ contains the solution set of \ref{OP}, and hence, is not empty.
		Take $ \hat x  \in U $.  Since $ f(x^k) \geq f(\hat x)$ for all $ k\geq 0 $, it implies that
		\begin{equation}\label{q1}
			\left \langle \nabla f(x^k),\hat x -x^k \right \rangle \leq 0\quad \forall k\geq 0.   
		\end{equation}
		Combing \eqref{e7} and \eqref{q1}, we get
		\begin{align}\label{e71}
			\| x^{k+1}-\hat x \|^2  &\leq 	\| x^{k}-\hat x \|^2  -	\| x^{k+1}-x^k \|^2  +2 \lambda_k    \left \langle \nabla f(x^k), x^k-x^{k+1}   \right \rangle.
		\end{align}
		Applying Lemma \ref{l1} with $ a_k = 	\| x^{k+1}-\hat x \|^2$, $ b_k =2 \lambda_k    \left \langle \nabla f(x^k), x^k-x^{k+1}   \right \rangle $, we deduce that the sequence  $ \{ \| x^{k}-\hat x \| \} $ is convergent for all $ \hat x \in U $.  Since the sequence $ \{  x^k\} $ is bounded, there exist a subsequence $ \{ x^{k_j}\} \subset \{x^k\}$ such that $ \lim_{i\to \infty} x^{k_i} =\bar x\in C.$ From \eqref{e5}, we know that the sequence $ \left\{ f(x^k) \right\}  $ is nondecreasing and convergent. It implies that $ \lim_{k\to \infty} f(x^k) =f(\bar x) $ and $ f(\bar x) \leq f(x^k) $ for all $ k\geq 0. $ This means $ \bar x \in U $ and  the sequence $ \left\{  \|x^k -\bar x \| \right\}  $ is convergent. Thus,
		\[ 
		\lim\limits_{k\to \infty} \| x^k-\bar x \| =\lim\limits_{i\to \infty} \| x^{k_i}-\bar x \| =0.
		\]
		Note that each limit point of $ \left\{ x^k\right\} $ is a stationary point of the problem. Then, the whole sequence $ \left\{x^k \right\} $ converges to $ \bar x $ - a stationary point of the problem. Moreover, when $ f $ is pseudoconvex, this stationary point becomes a solution of \ref{OP}.  		 \hfill	 $\Box$
	\end{proof}

\begin{remark}\label{rmk2} In Algorithm GDA, we can choose $\lambda_0 = \lambda,$ with the constant number $ \lambda \leq 2(1-\sigma)/L.$ Then we get $(1-\sigma)/\lambda_0 - L/2 \geq 0.$ Combined with \eqref{e4}, it implies that the condition $f(x^{k+1}) \leq f(x^k) -\sigma \left< \nabla f (x^k), x^k-x^{k+1}   \right>$ is satisfied and the step-size $\lambda_k = \lambda$ for all step $k.$ Therefore, Algorithm GDA is still applicable for the constant step-size $ \lambda \leq 2(1-\sigma)/L.$ For any $\lambda \in (0,2/L),$ there exists $\sigma \in (0,1)$ such that $\lambda \leq 2(1-\sigma)/L.$ As a result, if the value of Lipschitz constant $L$ is prior known, we can choose the constant step-size $ \lambda \in (0,2/L)$ as in the gradient descent algorithm for solving convex programming problems. The gradient descent method with constant step-size for quasiconvex programming problems, a particular version of Algorithm GDA, is shown below.
\end{remark}

\begin{algorithm}\label{algGDA}(Algorithm GD)\\
		\medskip
		\textbf{Step 0.} Choose $x^0 \in C$,  $\lambda \subset (0,2/L)$. Set $k=0$.\\
		\textbf{Step 1.} Given $x^k,$ compute $x^{k+1}$ as
		$$x^{k+1}=P_C (x^k-\lambda \nabla f (x^k))$$ 
		\textbf{Step 2.}  Update $k :=k+1$. \textbf{If} $ x^{k+1}=x^{k}$ \textbf{ then } STOP \textbf{ else } go to \textbf{Step  1}.
	\end{algorithm}

Next, we estimate the convergence rate of Algorithm \ref{algGDA}  in solving  unconstrained optimization problems.
\begin{corollary}
	Assume that $f$ is convex, $ C=\mathbb R^m $ and $\{x^k\}$ is the sequence generated by Algorithm \ref{algGDA}. Then,
	$$ 
f(x^k)-f(x^*) =O \left( \frac 1 k\right) , 
 $$
where $x^*$ is a solution of the problem.
	\end{corollary}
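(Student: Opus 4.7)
The plan is to leverage the machinery already developed in the proof of Theorem \ref{theorem1} and combine it with the standard convex one-step contraction argument. First, I would note that since $C=\mathbb{R}^m$, the projection $P_C$ is the identity, so $x^{k+1}-x^k=-\lambda_k\nabla f(x^k)$ and in particular $\langle\nabla f(x^k),x^k-x^{k+1}\rangle=\lambda_k\|\nabla f(x^k)\|^2$ and $\|x^{k+1}-x^k\|^2=\lambda_k^2\|\nabla f(x^k)\|^2$. As established in the proof of Theorem \ref{theorem1}, the step-size stabilises: there exists $k_1$ and $\lambda>0$ with $\lambda_k=\lambda$ for $k\ge k_1$, and \eqref{e5} holds. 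Substituting the two identities above into \eqref{e5} yields, for all $k\ge k_1$,
\begin{equation*}
f(x^{k+1}) \le f(x^k) - \sigma\lambda\,\|\nabla f(x^k)\|^2.
\end{equation*}

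Next I would exploit convexity together with a diameter bound. Since $f$ is convex (hence quasiconvex), the argument around \eqref{e71} in the proof of Theorem \ref{theorem1} shows that the sequence $\{\|x^k-x^*\|\}$ is convergent, hence bounded by some constant $D>0$. Convexity then gives the familiar inequality
\begin{equation*}
f(x^k)-f(x^*) \le \langle\nabla f(x^k),x^k-x^*\rangle \le D\,\|\nabla f(x^k)\|,
\end{equation*}
so that $\|\nabla f(x^k)\|^2 \ge \Delta_k^2/D^2$, where $\Delta_k:=f(x^k)-f(x^*)\ge 0$.

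Combining the two displays, I obtain the recursion $\Delta_{k+1}\le\Delta_k - c\,\Delta_k^2$ with $c:=\sigma\lambda/D^2>0$, valid for all $k\ge k_1$. The remainder is the textbook trick: since $\Delta_{k+1}\le\Delta_k$, dividing by $\Delta_k\Delta_{k+1}$ gives $\tfrac{1}{\Delta_{k+1}}\ge\tfrac{1}{\Delta_k}+c$, which telescopes to $\tfrac{1}{\Delta_k}\ge\tfrac{1}{\Delta_{k_1}}+c(k-k_1)$ and hence $\Delta_k=O(1/k)$. (If $\Delta_{k_1}=0$ the conclusion is immediate.)

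The only delicate point is the boundedness of $\{\|x^k-x^*\|\}$, since without it the inequality $\Delta_k\le D\|\nabla f(x^k)\|$ is useless. I expect this to be the main thing to highlight rather than re-derive, pointing explicitly to the Fej\'er-type estimate \eqref{e71} applied with $\hat x=x^*\in U$ from the proof of Theorem \ref{theorem1}. Everything else is a routine manipulation of the stabilised descent inequality and the standard $1/k$ recursion lemma.
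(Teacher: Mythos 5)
Your proposal is correct and follows essentially the same route as the paper: stabilised step-size from Theorem \ref{theorem1}, the descent inequality \eqref{e5} rewritten via $x^k-x^{k+1}=\lambda_k\nabla f(x^k)$, the convexity bound $\Delta_k\le M\|\nabla f(x^k)\|$ using boundedness of $\{x^k\}$, and the standard $\Delta_{k+1}\le\Delta_k-Q\Delta_k^2$ telescoping to $O(1/k)$. Your explicit justification of the boundedness of $\{\|x^k-x^*\|\}$ via the Fej\'er-type estimate \eqref{e71} is a point the paper leaves implicit, but the argument is otherwise the same.
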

\begin{proof}
Let $ x^* $ be a solution of the problem. Denote $ \Delta_k:= f(x^k) -f(x^*) $.	
From \eqref{e5}, noting that $ x^k-x^{k+1}=\lambda_k \nabla f (x^k) $, we have
 \begin{equation}\label{f1}
\Delta_{k+1} \leq \Delta_{k} -\sigma \lambda_{k_1} \| \nabla f(x^k)  \|^2   \quad \forall k\geq k_1.
 \end{equation}
 On the other hand, since the sequence $ \left\{x^k \right\} $ is bounded and $ f $ is convex, it holds that
 \begin{align}\label{f2}
 0\leq \Delta_{k}  & \leq \left\langle  \nabla f(x^k), x^k-x^*   \right\rangle \notag \\
 & \leq M \|  \nabla f(x^k) \|,
\end{align}
where $ M:=\sup\left\{ \| x^k-x^* \|:k\geq k_1  \right\} <\infty$. From \eqref{f1} and \eqref{f2}, we arrive at
\begin{equation}\label{f3}
	\Delta_{k+1} \leq \Delta_{k} - Q  \Delta_{k}^2  \quad \forall k\geq k_1,
\end{equation}
where $ Q:= \frac {\sigma \lambda_{k_1} }{M^2 }$. Noting that $ \Delta_{k+1} \leq \Delta_{k} $, from \eqref{f3}, we obtain\\
 \begin{equation*}
\dfrac 1 {\Delta_{k+1}} \geq \dfrac 1 {\Delta_{k}}+Q\geq \ldots \geq  \dfrac 1 {\Delta_{k_1}}+(k-k_1)Q,
 \end{equation*}
which implies
\[ 
 f(x^k) -f(x^*)= O \left( \frac 1 k \right).
 \]
	\end{proof}

We investigate the stochastic variation of Algorithm GDA for application in large scale deep learning. The problem is stated as follows:
	$$
	\min _x \mathbb{E}\left[f_{\xi}(x)\right],
	$$
	where $\xi$ is the stochastic parameter and the function $f_{\xi}$ is $L$-smooth. We are generating a stochastic gradient $\nabla f_{\xi^k}\left(x^k\right)$ by sampling $xi$ at each iteration $k.$ The following Algorithm SGDA contains a detailed description.
	\begin{algorithm}\label{algSGDA}(Algorithm SGDA)\\
		\medskip
		\textbf{Step 0.} Choose $x^0 \in C$,  $\lambda_0 \subset (0,+\infty)$, $ \sigma, \kappa \in (0,1) $. Set $k=0$.\\
		\textbf{Step 1.} Sample $\xi^k$ and compute $x^{k+1}$ and $ \lambda_{k+1} $ as
		\begin{align*}
			& x^{k+1}=P_C (x^k-\lambda_k \nabla f_{\xi^k} (x^k)) ,\\
			& \textbf{If }  f_{\xi^k}(x^{k+1}) \leq f_{\xi^k} (x^k) -\sigma \left< \nabla f_{\xi^k} (x^k), x^k-x^{k+1}   \right>
			\textbf{ then set } \lambda_{k+1}:=\lambda_k \textbf{ else set } \lambda_{k+1}:=\kappa \lambda_k.
		\end{align*}
		\textbf{Step 2.}  Update $k :=k+1$. \textbf{If} $ x^{k+1}=x^{k}$ \textbf{ then } STOP \textbf{ else } go to \textbf{Step  1}.
	\end{algorithm}
	
	\section{Numerical experiments}
We use two existing cases and two large-scale examples with variable sizes to validate and demonstrate the performance of the proposed method. All tests are carried out in Python on a MacBook Pro M1 ($3.2 \mathrm{GHz}$ 8-core processor, $8.00 \mathrm{~GB}$ of RAM). The stopping criterion in the following cases is ``the number of iterations $\leq$ \#Iter" where \#Iter is the maximum number of iterations. Denote $x^*$ as the limit point of iterated series $\{x^k\}$ and Time the CPU time of the GDA algorithm using the stop criterion.

Nonconvex objective and constraint functions are illustrated in Examples $1$ and $2$ taken from \cite{Liu}. In addition, Example $2$ is more complex than Example $1$ regarding the form of functions. Implementing Example $3$ with a convex objective function and a variable dimension $n$ allows us to evaluate the proposed method compared to Algorithm GD. Example $3$ is implemented with a pseudoconvex objective function and several values for $n$ so that we may estimate our approach compared to Algorithm RNN.

To compare to neurodynamic method in \cite{Liu}, we consider Problem \ref{OP} with the constraint set is determined specifically by $C = \{x \in \mathbb{R}^n \mid {g}(x) \leq 0,\; Ax = b\},$
where $g(x):=\left(g_1(x), g_2(x), \ldots, g_m(x)\right)^{\top}$ and $g_i: \mathbb{R}^n \rightarrow \mathbb{R}, i=1, \ldots, m$  are differential quasiconvex, the matrix $A \in \mathbb{R}^{p \times n}$ and $b=\left(b_1, b_2, \ldots, b_p\right)^{\top} \in \mathbb{R}^p$. Recall that, in \cite{Liu}, the authors introduced the function
$$
\Psi(s)=\left\{\begin{array}{l}
1, \quad s>0 ; \\
{[0,1], \quad s=0 ;} \\
0, \quad s<0 .
\end{array}\right\},
$$
and
$$
P(x)=\sum_{i=1}^m \max \left\{0, g_i(x)\right\}.
$$
The neorodynamic algorithm established in \cite{Liu}, which uses recurrent neural network (RNN) models for solving Problem \ref{OP} in the form of differential inclusion as follows:
\begin{equation}
    \label{model} \tag{RNN}
    \dfrac{d}{d t} x(t) \in-c(x(t)) \nabla f(x(t))-\partial P(x(t))-\partial\|A x(t)-b\|_1
\end{equation}
where the adjusted term $c(x(t))$ is
$$
c(x(t))=\left\{\prod_{i=1}^{m+p} c_i(t) \mid c_i(t) \in 1-\Psi\left(J_i(x(t))\right), i=1,2, \ldots, m+p\right\}
$$
with
$$
J(x)=\left(g_1(x), \ldots, g_m(x),\left|A_1 x-b_1\right|, \ldots,\left|A_p x-b_p\right|\right)^{\top},
$$
the subgradient term of $P(x)$ is
$$
\partial P(x)= \begin{cases}0, & x \in \operatorname{int}\left(X\right) \\ \sum_{i \in I_0(x)}[0,1] \nabla g_i(x), & x \in \operatorname{bd}\left(X\right) \\ \sum_{i \in I_{+}(x)} \nabla g_i(x)+\sum_{i \in I_0(x)}[0,1] \nabla g_i(x), & x \notin X,\end{cases}
$$
with
$$ X = \{ x: g_i(x) \leq 0, i=1,2 \dots m\},$$
$$
I_{+}(x)=\left\{i \in\{1,2, \ldots, m\}: g_i(x)>0\right\},
$$
 $$I_0(x)=\left\{i \in\{1,2, \ldots, m\}: g_i(x)=0\right\},$$
and the subgradient term of $\|A x-b\|_1$ is
$$
\partial\|A x-b\|_1=\sum_{i=1}^p\left(2 \Psi\left(A_i x-b_i\right)-1\right) A_i^{\top},
$$
with $A_i \in \mathbb{R}^{1 \times n}(i=1,2, \ldots, p)$ be the row vectors of the matrix $A.$
	
	\begin{example} First, let's look at a simple nonconvex problem \ref{OP}:
		$$\begin{array}{ll}\text { minimize } & f(x)=\dfrac{x_{1}^{2}+x_{2}^{2}+3}{1+2 x_{1}+8 x_{2}} \\ \text { subject to } & x\in C,\end{array}$$ where $C = \{x=\left(x_{1}, x_{2}\right)^{\top} \in \mathbb{R}^{2}| g_{1}(x)=-x_{1}^{2}-2 x_{1} x_{2} \leq-4;\; x_{1}, x_{2} \geq 0\}.$ It is quite evident that for this problem, the objective function $f$ is pseudoconvex on the convex feasible set (Example 5.2 \cite{Liu}, Liu et al., 2022).
		
Fig. \ref{fig:ex1} illustrates the temporary solutions of the proposed method for various initial solutions. It demonstrates that the outcomes converge to the optimal solution $x^*=(0.8922,1.7957)$ of the given problem. The objective function value generated by Algorithm GDA is $0.4094$, which is better than the optimum value $0.4101$ of the neural network in Liu et al. (2022) \cite{Liu}.

\begin{figure}[h]
\centering
\includegraphics[scale = 0.25]{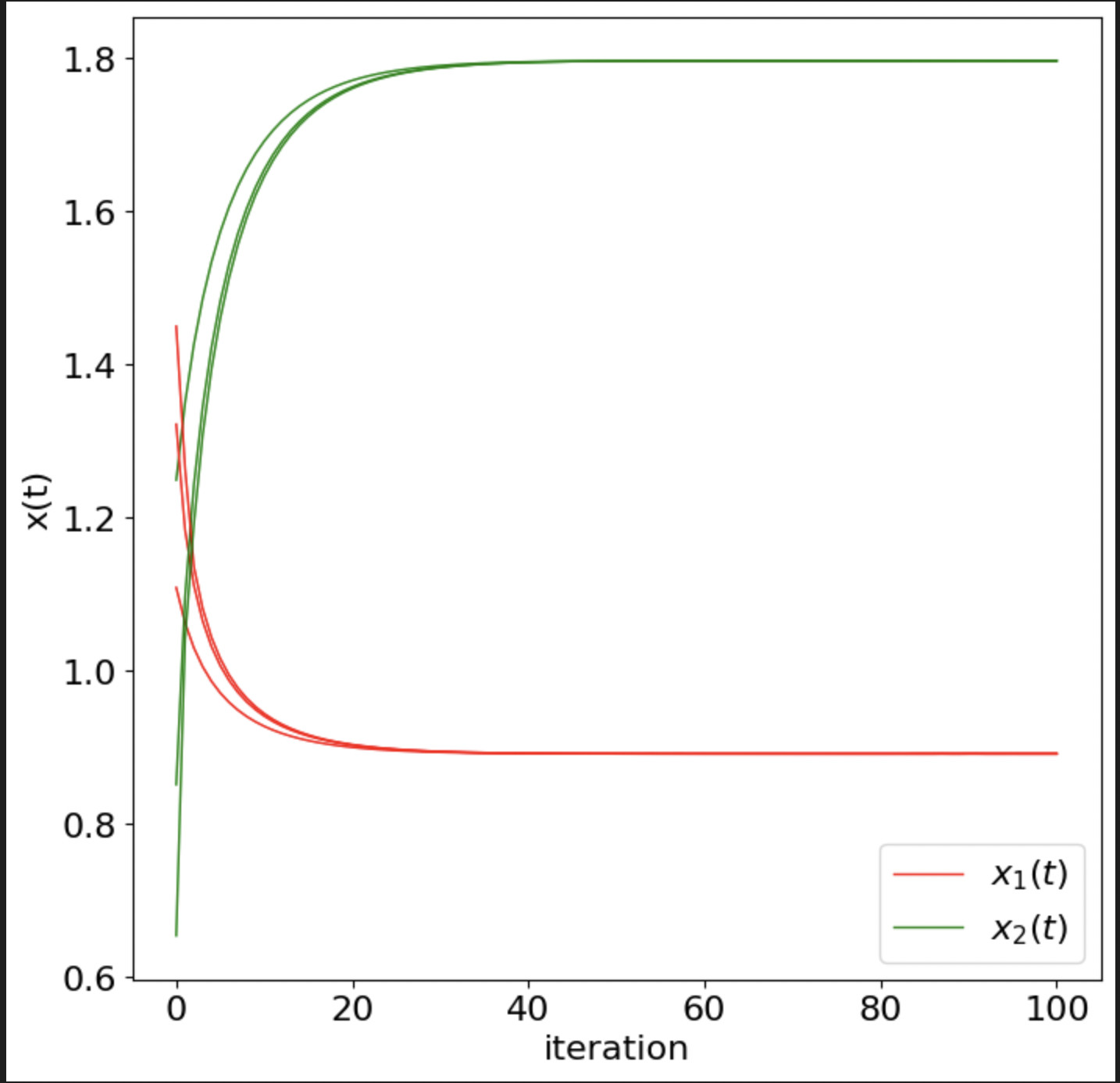}
\caption{Computational results for Example 1.}
\label{fig:ex1}
\end{figure}

	\end{example}
	
	\begin{example} Consider the following nonsmooth pseudoconvex optimization problem with nonconvex inequality constraints (Example 5.1 \cite{Liu}, Liu et al., 2022):
		$$
		\begin{array}{ll}
			\text { minimize } & f(x)=\dfrac{e^{\left|x_{2}-3\right|}-30}{x_{1}^{2}+x_{3}^{2}+2 x_{4}^{2}+4} \\
			\text { subject to } & g_{1}(x)=\left(x_{1}+x_{3}\right)^{3}+2 x_{4}^{2} \leq 10, \\
			& g_{2}(x)=\left(x_{2}-1\right)^{2} \leq 1, \\
			& 2 x_{1}+4 x_{2}+x_{3}=-1,
		\end{array}
		$$
		where $x=\left(x_{1}, x_{2}, x_{3}, x_{4}\right)^{\top} \in \mathbb{R}^{4}$. The objective function $f(x)$ is nonsmooth pseudoconvex on the feasible region $C,$ and the inequality constraint $g_{1}$ is continuous and quasiconvex on $C$, but not pseudoconvex (Example 5.1 \cite{Liu}). It is easily verified that $x_2 \not=3$ for any $x\in C.$ Therefore, the gradient vector of $\left|x_{2}-3\right|$ is $(x_{2}-3)/\left|x_{2}-3\right|$ for any $x\in C.$ From that, we can establish the gradient vector of $f(x)$ at a point $x \in C$ used in the algorithm.  	
		Fig \ref{fig:ex2} shows that Algorithm GDA converges to an optimal solution  $x^{*}=(-1.0649,0.4160,-0.5343,0.0002)^{\top}$ with the optimal value  $-3.0908$, which is better than the optimal value $-3.0849$ of the neural network model in \cite{Liu}.
		
		\begin{figure}[h]
    \centering
    \includegraphics[scale = 0.25]{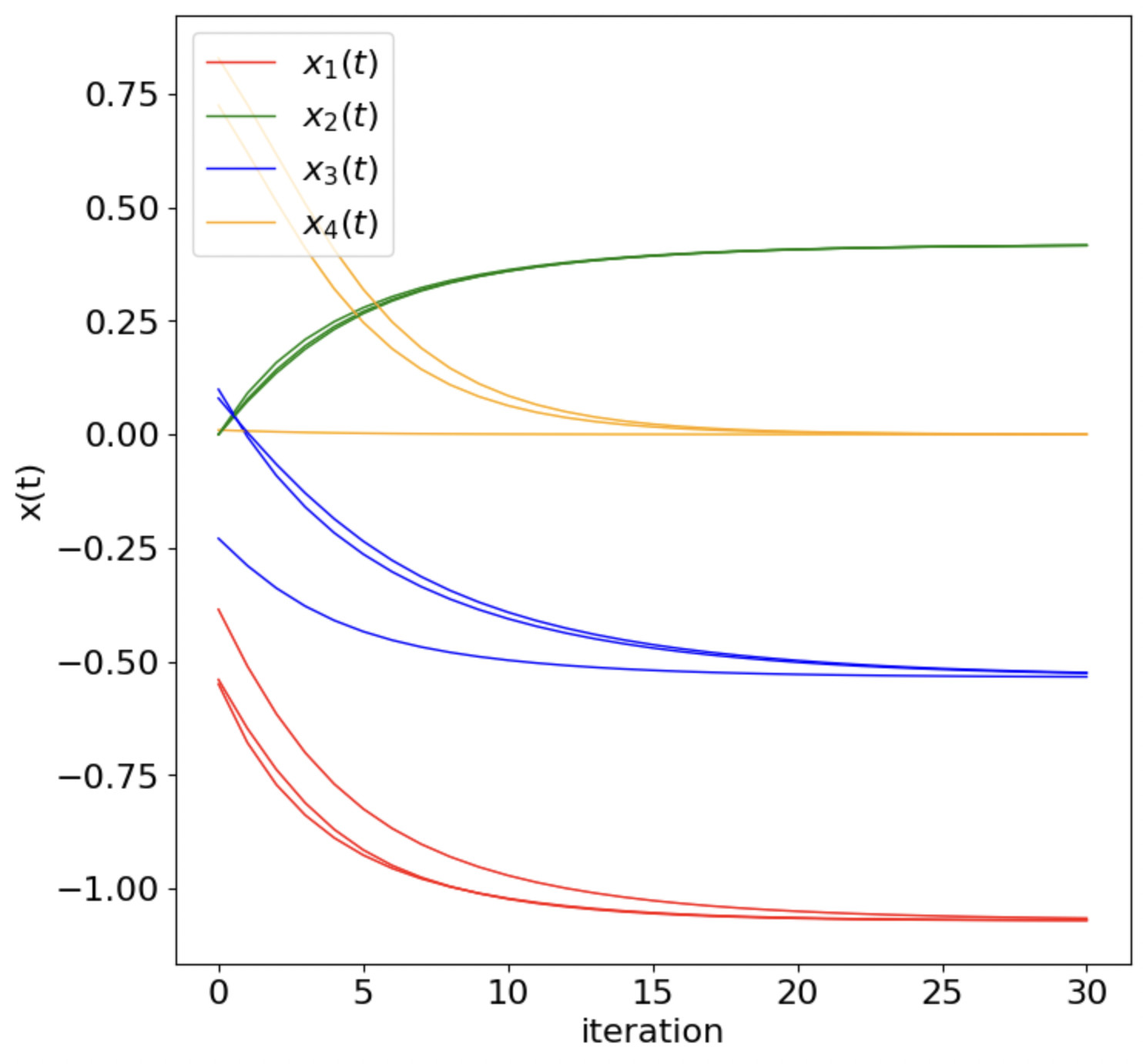}
    \caption{Computational results for Example 2.}
    \label{fig:ex2}
    \end{figure}
	\end{example}

	\begin{example}  Let $e:=(1, \ldots, n) \in \mathbb{R}^{n}$ be a vector, $\alpha>0$ and $\beta>0$ be constants satisfying the parameter condition $2 \alpha>3 \beta^{3 / 2} \sqrt{n}$. Consider Problem \ref{OP} (Example 4.5 \cite{Ferreira}, Ferreira et. al, 2022) with the associated function
		$$
		f(x):=a^{T} x+\alpha x^{T} x+\frac{\beta}{\sqrt{1+\beta x^{T} x}} e^{T} x,
		$$
		with $a \in \mathbb{R}_{++}^{n}$ is convex and the nonconvex  constraint is given by $$C:=\left\{x \in \mathbb{R}_{++}^{n}: 1 \leq x_{1} \ldots x_{n}\right\}.$$ This example is implemented to compare Algorithm GDA with the original gradient descent algorithm (GD). We choose a random number $\beta = 0.741271, \alpha = 3\beta^{3/2}\sqrt{n+1}$ fulfilled the parameter condition and Lipschitz coefficient $L = \left(4 \beta^{3 / 2} \sqrt{n}+3 \alpha\right)$ suggested in \cite{Ferreira}. The step size of Algorithm GD is $\lambda = 1/L$ and the initial step size of Algorithm GDA is ${\lambda}_0 = 5/L.$ Table \ref{table_Ex3} shows the optimal value, number of loops, computational time of two algorithms through the different dimensions. From this result, Algorithm GDA is more efficient than GD at both the computational time and the optimal output value, specially for the large scale dimensions.
		\begin{table}[h!]
\centering
\renewcommand{\arraystretch}{2}
\begin{tabular}{rrrrrrr}
\hline \multirow{2}{*}{$\quad n$} & \multicolumn{3}{c}{ Algorithm GDA (proposed)} & \multicolumn{3}{c}{ Algorithm GD } \\ \cline{2-7}
 & $f\left(x^*\right)$ & \#Iter & Time & $f\left(x^*\right)$ & \#Iter & Time \\
\hline \hline {$\quad$10} & $\textbf{79.3264}$ & 9 & $\textbf{0.9576}$ & $79.3264$ & 15 & $1.5463$  \\
\hline \multirow{1}{*}{$\quad$20} & $\textbf{220.5622}$ & 10 & $\textbf{6.0961}$ & $220.5622$ & 67 & $34.0349$ \\
\hline \multirow{1}{*}{$\quad$50} & $\textbf{857.1166}$ & 12 & $\textbf{2.8783}$ & $857.1166$ & 16 & $4.6824$ \\
\hline \multirow{1}{*}{$\quad$100} & $\textbf{2392.5706}$ & 12 & $\textbf{17.2367}$ & $2392.5706$ & 17 & $30.8886$ \\
\hline \multirow{1}{*}{$\quad$200} & $\textbf{7065.9134}$ & 65 & $\textbf{525.1199}$ & $7179.3542$ & 200 & $1610.6560$ \\
\hline \multirow{1}{*}{$\quad$500} & $\textbf{26877.7067}$ & 75 & $\textbf{2273.0011}$ & $27145.6292$ & 500 & $14113.5003$ \\
\hline
\end{tabular}
	    \caption{Computational results for Example 3.}
	    \label{table_Ex3}
\end{table}
		
\end{example} 

\begin{example} 
To compare Algorithm GDA to Algorithm RNN (Liu et al., 2022 \cite{Liu}), consider Problem \ref{OP} with the objective function $$f(x)=-\exp \left(-\sum_{i=1}^{n} \frac{x_i^2}{\varrho_i^2}\right)$$ which is pseudoconvex on the convex constraint set
$$ C:= \{Ax=b, g(x) \leq 0\},$$
where $x \in \mathbb{R}^{n},$ the parameter vector $\varrho=\left(\varrho_1, \varrho_2, \ldots, \varrho_{n}\right)^{\top}$ with $\varrho_i>0,$ the matrix $A = (a_1, a_2,\ldots,a_n) \in \mathbb{R}^{1 \times n}$ with $a_{i} = 1$ for $1 \leq i\leq n/2$ and $a_{i} = 3$ for $n/2 <i \leq n,$ and the number $ b=16$. The inequality constraints are $$g_i(x)=x_{10 \cdot(i-1)+1}^2+x_{10 \cdot(i-1)+2}^2+\cdots+x_{10 \cdot(i-1)+10}^2-20,$$ for $i=1,2,\ldots, n/10.$ Table \ref{table_Ex4} presents the computational results of Algorithms GDA and RNN. Note that the function $-\ln (-z)$ is monotonically increasing by $z \in \mathbb{R}, z<0.$ Therefore, to compare the approximated optimal value through iterations, we compute $-\ln (-f(x^*))$ instead of $f(x^*),$ with approximated optimal solution $x^*.$ For each $n,$ we solve Problem \ref{OP} to find the value $-\ln (-f(x^*)),$ the number of iterations (\#Iter) to terminate the algorithms, and the computation time (Time) by seconds. The computational results reveal that the proposed algorithm outperforms Algorithm RNN in the test scenarios for both optimum value and computational time, specially when the dimensions is large scale. 

	\begin{table}[h!]
\centering
\renewcommand{\arraystretch}{2}
\begin{tabular}{rrp{0.05\textwidth}rrp{0.05\textwidth}r}
\hline \multirow{2}{*}{$\quad n$} & \multicolumn{3}{c}{ Algorithm GDA (proposed) } & \multicolumn{3}{c}{ Algorithm RNN } \\ \cline{2-7}
 & $-\ln (-f(x^*))$ & \#Iter & Time & $-\ln (-f(x^*))$ & \#Iter & Time \\
\hline \hline 10 & $\textbf{5.1200}$ & 10 & \textbf{0.3} & $5.1506$ & 1000 & $256$ \\
\hline 20 & $\textbf{2.5600}$ & 10 & \textbf{0.8} & $2.5673$ & 1000 & $503$ \\
\hline 50 & $\textbf{1.0240}$ & 10 & \textbf{2} & $1.0299$ & 1000 & $832$ \\
\hline 100 & $\textbf{0.5125}$ & 10 &\textbf{7} & $13.7067$ & 1000 & $1420$ \\
\hline 300 & $\textbf{15.7154}$ & 10 & \textbf{84} & $39.3080$ & 1000 & $3292$ \\
\hline 400 & $\textbf{20.9834}$ & 10 & \textbf{163} & $57.6837$ & 1000 & $4426$ \\
\hline 600 & $\textbf{29.0228}$ & 10 & \textbf{371} & $83.6265$ & 1000 & $6788$ \\
\hline
\end{tabular}
	    \caption{Computational results for Example 4.}
	    \label{table_Ex4}
\end{table}

\end{example}

\section{Applications to machine learning}
The proposed method, like the GD algorithm, has many applications in machine learning. We analyze three common machine learning applications, namely supervised feature selection, regression, and classification, to demonstrate the accuracy and computational efficiency compared to other alternatives.

Firstly, the feature selection problem can be modeled as a minimization problem of a pseudoconvex fractional function on a convex set, which is a subclass of Problem \ref{OP}. This problem is used to compare the proposed approach to neurodynamic approaches. Secondly, since the multi-variable logistic regression problem is a convex programming problem, the GDA algorithm and available variants of the GD algorithm can be used to solve it. Lastly, a neural network model for an image classification problem is the same as a programming problem with neither a convex nor a quasi-convex objective function. For training this model, we use the stochastic variation of the GDA method (Algorithm SGDA) as a heuristic technique. Although the algorithm's convergence cannot be guaranteed like the cases of pseudoconvex and quasiconvex objective functions, the theoretical study showed that if the sequence of points has a limit point, it converges into a stationary point of the problem (see Theorem \ref{theorem1}). Computational experiments indicate that the proposed method outperforms existing neurodynamic and gradient descent methods.
\subsection{Supervised feature selection}

Feature selection problem is carried out on the dataset with $p$-feature set $\mathcal{F}=\left\{F_{1}, \ldots, F_{p}\right\}$  and $n$-sample set $\left\{\left(x_{i}, y_{i}\right) \mid i=\right.$ $1, \ldots, n\}$, where $x_{i}=\left(x_{i 1}, \ldots, x_{i p}\right)^{\mathrm{T}}$ is a $p$-dimensional feature vector of the $i$th sample and $y_{i} \in\{1, \ldots, m\}$ represents the corresponding labels indicating classes or target values. In \cite{Wang}, an optimal subset of $k$ features $\left\{F_{1}, \ldots, F_{k}\right\} \subseteq \mathcal{F}$ is chosen with the least redundancy and the highest relevancy to the target class $y$. The feature redundancy is characterized by a positive semi-definite matrix $Q.$ Then the first aim is minimizing the convex quadratic function $w^{T} Q w.$ The feature relevance is measured by $\rho^{T} w,$ where   $\rho=$ $\left(\rho_{1}, \ldots, \rho_{p}\right)^{T}$ is a  relevancy parameter vector. Therefore, the second aim is maximizing the linear function $\rho^{T} w.$ Combining two goals deduces the equivalent problem as follows:  
	 \begin{equation}\label{fs_problem}
	    \begin{array}{ll}\text { minimize } & \dfrac{w^{T} Q w}{\rho^{T} w} \\ \text { subject to } & e^{T} w=1 \\ & w \geq 0,\end{array}
	\end{equation}
where $w=\left(w_{1}, \ldots, w_{p}\right)^{T}$ is the feature score vector to be determined. Since the objective function of problem \eqref{fs_problem} is the fraction of a convex function over a positive linear function, it is pseudoconvex on the constraint set. Therefore, we can solve  \eqref{fs_problem} by Algorithm GDA. 

In the experiment, we implement the algorithms with the Parkinsons dataset, which has 23 features and 197 samples, downloaded at https://archive.ics.uci.edu/ml/datasets/parkinsons. The similarity coefficient matrix $Q$ is determined by $Q=\delta I_p+S$ (see \cite{Wang}), where $p \times p$-matrix $S = (s_{ij})$ with $$s_{i j}=\max \left\{0, \frac{I\left(F_i ; F_j ; y\right)}{H\left(F_i\right)+H\left(F_j\right)}\right\},$$ the information entropy of a random variable vector $\hat{X}$ is 
$$H(\hat{X})=-\sum_{\hat{x} \in \hat{X}} p(\hat{x}) \log p(\hat{x}),$$ 
the multi-information of three random vectors $\hat{X}, \hat{Y}, \hat{Z}$ is  $I(\hat{X} ; \hat{Y} ; \hat{Z}) =I(\hat{X} ; \hat{Y})-I(\hat{X}, \hat{Y} \mid \hat{Z})$ 
with the mutual information of two random vectors  $\hat{X}, \hat{Y}$ defined by $$I(\hat{X} ; \hat{Y})=\sum_{\hat{x} \in \hat{X}} \sum_{\hat{y} \in \hat{Y}} p(\hat{x}, \hat{y}) \log \frac{p(\hat{x}, \hat{y})}{p(\hat{x}) p(\hat{y})}$$  and the conditional mutual information between $\hat{X}, \hat{Y}$ and $\hat{Z}$ defined by $$I(\hat{X} ; \hat{Y} \mid \hat{Z})=\sum_{\hat{x} \in \hat{X}} \sum_{\hat{y} \in \hat{Y}} \sum_{\hat{z} \in \hat{z}} p(\hat{x}, \hat{y}, \hat{z}) \log \frac{p(\hat{x}, \hat{y} \mid \hat{z})}{p(\hat{x} \mid \hat{z}) p(\hat{y} \mid \hat{z})}.$$
The feature relevancy vector $\rho = (\rho_{1}, \ldots, \rho_{p})^{T}$ is determined by Fisher score
$$\rho\left(F_i\right)=\frac{\sum_{j=1}^K n_j\left(\mu_{i j}-\mu_i\right)^2}{\sum_{j=1}^K n_j \sigma_{i j}^2},$$
where $n_j$ denotes the number of samples in class $j, \mu_{i j}$ denotes the mean value of feature $F_i$ for samples in class $j, \mu_i$ is the mean value of feature $F_i$, and $\sigma_{i j}^2$ denotes the variance value of feature $F_i$ for samples in class $j$.

The approximated optimal value of problem \eqref{fs_problem} is $f(w^*)=0.153711$ with the computing time $T= 6.096260$s for the proposed algorithm, while $f(w^*)=0.154013, T=11.030719$ for Algorithm RNN. In comparison to the Algorithm RNN, our algorithm outperforms it in terms of both accuracy and computational time.

\subsection{Multi-variable logistic regression} 
The experiments are performed with the dataset including $\mathbf{{N}}$ observations $(\mathbf{a}_i,b_i)\in \mathbb{R}^{d} \times \mathbb{R}, i=1, \ldots, n.$ The cross-entropy loss function for multi-variable logistic regression is given by
    $J(x)=-\sum_{i=1}^{\mathbf{N}}\big(b_{i}\log\left(\sigma\left(-{x}^{T}\mathbf{a}_{i}\right)\right)+(1-b_{i})\log\left(1-\sigma\left(-{x}^{T}\mathbf{a}_{i}\right)\right)\big),$ where $\sigma$ is the sigmoid function. Associated with $\ell_{2}$-regularization, we get the regularized loss function  $\bar{J}(x) = J(x) + \frac{1}{2\mathbf{N}}\|x\|^{2}.$ The Lipschitz coefficient $L$ is estimated by $\frac{1}{2\mathbf{N}} (\|A\|^{2}/2+1),$ where $A=\left(a_{1}^{\top}, \ldots, a_{n}^{\top}\right)^{\top}.$ We compare the algorithms for training the logistic regression problem by using datasets Mushrooms and W8a (see in \cite{Malitsky}). The GDA method is compared to the GD algorithm with a step size of $1/L$ and Nesterov's accelerated method. The computational results are shown in Fig. \ref{ex_reg01} and Fig. \ref{ex_reg02} respectively. The figures suggest that Algorithm GDA outperforms Algorithm GD and Nesterov's accelerated method in terms of objective function values during iterations. In particular, Fig. \ref{ex_reg02} shows the change of the objective function values according to different $\kappa$ coefficients. In this figure, the notation ``GDA\_0.75" respects to the case $\kappa = 0.75.$ Fig. \ref{ex_reg03} presents the reduction of step-sizes from an initial step-size with respect to the results in Fig. \ref{ex_reg02}.

	\begin{figure}[h]
\centering
\includegraphics[scale = 0.4]{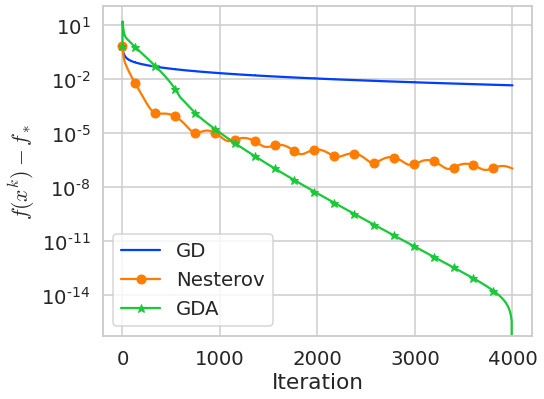}
\caption{The computational results for logistic regression with dataset Mushrooms.}
\label{ex_reg01}
\end{figure}

	\begin{figure}[h]
\centering
\includegraphics[scale = 0.3]{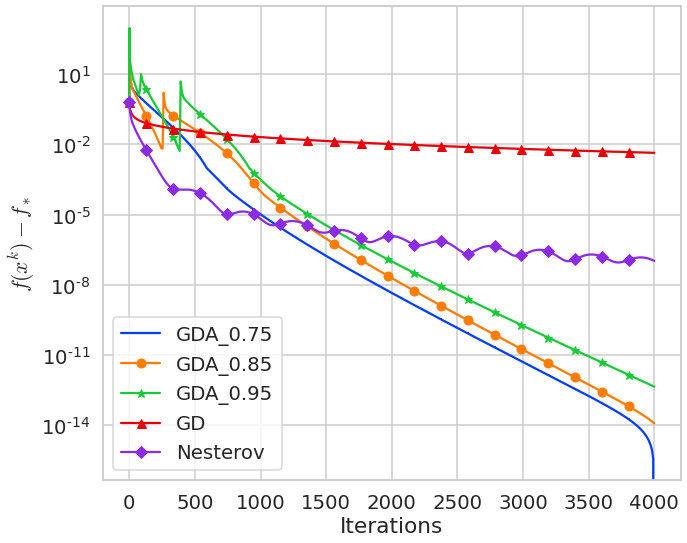}
\caption{The computational results for logistic regression with dataset W8a.}
\label{ex_reg02}
\end{figure}

	\begin{figure}[h]
\centering
\includegraphics[scale = 0.35]{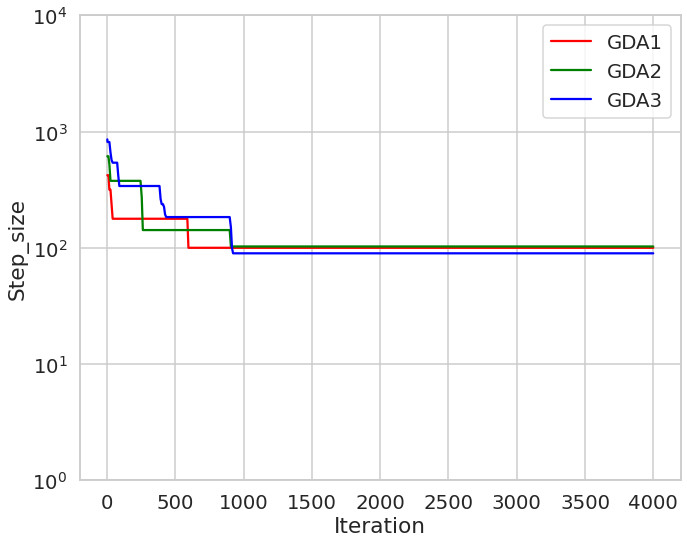}
\caption{The step-sizes changing for each iteration in logistic regression with dataset W8a.}
\label{ex_reg03}
\end{figure}
	
	\subsection{Neural networks for classification}
In order to provide an example of how the proposed algorithm can be implemented into a neural network training model, we will use the standard ResNet-18 architectures that have been implemented in PyTorch and will train them to classify images taken from the Cifar10 dataset downloaded at https://www.cs.toronto.edu/~kriz/cifar.html, while taking into account the cross-entropy loss. In the studies with ResNet-18, we made use of Adam's default settings for its parameters. 

For training this neural network model, we use the stochastic variation of the GDA method (Algorithm SGDA) to compare to Stochastic Gradient Descent (SGD) algorithms.  The computational outcomes shown in Fig. \ref{ex_ANN01} and  Fig. \ref{ex_ANN02} reveal that Algorithm SGDA outperforms Algorithm SGD in terms of testing accuracy and train loss over iterations. 
	\begin{figure}[h]
\centering
\includegraphics[scale = 0.4]{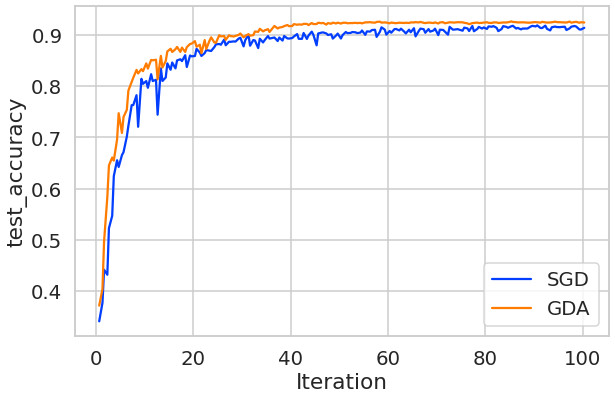}
\caption{The test accuracy through iterations for ResNet-18 model.}
\label{ex_ANN01}
\end{figure}

	\begin{figure}[h]
\centering
\includegraphics[scale = 0.4]{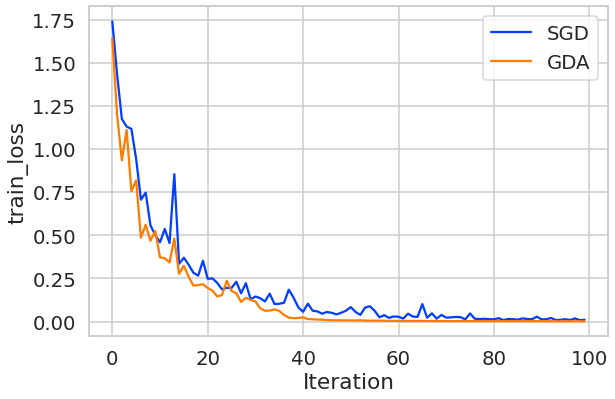}
\caption{The training loss through iterations for ResNet-18 model.}
\label{ex_ANN02}
\end{figure}

\section{Conclusion}
We proposed a novel easy adaptive step-size process in a wide family of solution methods for optimization problems with non-convex objective functions. This approach does not need any line-searching or prior knowledge, but rather takes into consideration the iteration sequence's behavior. As a result, as compared to descending line-search approaches, it significantly reduces the implementation cost of each iteration. We demonstrated technique convergence under simple assumptions. We demonstrated that this new process produces a generic foundation for optimization methods. The preliminary results of computer experiments demonstrated the new procedure's efficacy.
	
\section*{Availability of data and materials}
The data that support the findings of this study are available from the corresponding author,
upon reasonable request.

\section*{Acknowledgments}
This work was supported by Vietnam Ministry of Education and Training.

\end{document}